\documentclass[leqno,12pt]{amsart} 

\setlength{\textheight}{23cm}
\setlength{\textwidth}{16cm}
\setlength{\oddsidemargin}{0cm}
\setlength{\evensidemargin}{0cm}
\setlength{\topmargin}{0cm}
\usepackage{amssymb} 
\usepackage{amsmath}
\usepackage{amsthm}
\usepackage{amscd}
\usepackage{bm}
\usepackage{graphicx,psfrag,wrapfig}
%
 
%
%
\theoremstyle{plain} 
\newtheorem{theorem}{Theorem}[section]  
\newtheorem{lemma}[theorem]{Lemma}

\theoremstyle{definition}

\newtheorem{example}[theorem]{Example}

%
%
%
 

\newcommand{\affine}{\mathbb{C}}

\newcommand{\const}{\mathrm{const}}
\newcommand{\norm}[1]{\left|\!\left|#1\right|\!\right|}

\newcommand{\vol}{\mathrm{vol}}


\begin{document}

\title[Curvatures of ASD connections over the cylinder]
{Sharp lower bound on the curvatures of ASD connections over the cylinder} 

\author[M. Tsukamoto]{Masaki Tsukamoto}

\date{\today}

\keywords{ASD connection, curvature}

\subjclass[2010]{53C07}

\begin{abstract}
We prove a sharp lower bound on the curvatures of non-flat ASD connections over 
the cylinder.
\end{abstract}

\maketitle

\section{Introduction}\label{section: introduction}
The purpose of this note is to calculate explicitly a universal lower bound on the 
curvatures of non-flat ASD connections over the cylinder $\mathbb{R}\times S^3$.

First we fix our conventions.
Let $S^3 = \{x_1^2+x_2^2+x_3^2+x_1^4=1\}\subset \mathbb{R}^4$ be the $3$-sphere equipped with 
the Riemannian metric induced by the Euclidean metric on $\mathbb{R}^4$.
Set $X:=\mathbb{R}\times S^3$.
We give the standard metric on $\mathbb{R}$, and $X$ is equipped with the product metric.

Let $\mathbb{H}$ be the space of quaternions.
Consider $SU(2) = \{x\in \mathbb{H}|\, |x|=1\}$ with the Riemannian metric induced by 
the Euclidean metric on $\mathbb{H}$. (Hence it is isometric to $S^3$ above.)
We naturally identify $su(2) := T_1 SU(2)$ with the imaginary part 
$\mathrm{Im} \mathbb{H} := \mathbb{R} i + \mathbb{R} j + \mathbb{R} k$.
Here $i$, $j$ and $k$ have length $1$.

Let $E := X\times SU(2)$ be the product $SU(2)$-bundle.
Let $A$ be a connection on $E$, and let $F_A$ be its curvature.
$F_A$ is a $su(2)$-valued $2$-form on $X$.
Hence for each point $p\in X$ the curvature $F_A$ can be considered as a linear map
\[ F_{A,p}: \Lambda^2(T_pX)\to su(2).\]
We denote by $|F_{A,p}|_{\mathrm{op}}$ the operator norm of this linear map.
The explicit formula is as follows:
Let $x_1,x_2,x_3,x_4$ be the normal coordinate system on $X$ centered at $p$.
Let $A=\sum_{i=1}^4 A_i dx_i$.
Each $A_i$ is a $su(2)$-valued function.
Then $F(A)_{ij} := F_A(\partial/\partial x_i, \partial/\partial x_j) 
= \partial_i A_j - \partial_j A_i + [A_i,A_j]$.
Since $\partial/\partial x_i \wedge \partial/\partial x_j$ $(1\leq i<j\leq 4)$
become a orthonormal basis of $\Lambda^2(TX)$ at $p$,
the norm $|F_{A,p}|_{\mathrm{op}}$ is equal to  
\[ \sup\left\{\left|\sum_{1\leq i<j\leq 4}a_{ij}F(A)_{ij,p}\right||\, a_{ij}\in \mathbb{R}, 
   \sum_{1\leq i<j\leq 4} a_{ij}^2=1 \right\}. \]
Let $\norm{F_A}_{\mathrm{op}}$ be the supremum of $|F_{A,p}|_{\mathrm{op}}$ over $p\in X$.
The main result is the following.
\begin{theorem}\label{main theorem}
The minimum of $\norm{F_A}_{\mathrm{op}}$ over non-flat ASD connections $A$ on $E$ is 
equal to $1/\sqrt{2}$.
\end{theorem}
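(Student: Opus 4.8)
The plan is to establish the two halves of the statement separately: that some non‑flat ASD connection realizes the value $1/\sqrt 2$, and that $1/\sqrt 2$ is a lower bound for all non‑flat ASD connections.

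\emph{Realizing the value.} I would use the conformal equivalence $\mathbb{R}\times S^3\cong \mathbb{R}^4\setminus\{0\}$ given by $t=\log|x|$. Since the ASD equation is conformally invariant in dimension four, the standard instanton‑number‑one (BPST) connection of scale $\lambda$ centered at the origin transplants to a non‑flat ASD connection $A$ on $E$, and it remains to compute $\norm{F_A}_{\mathrm{op}}$. On $\mathbb{R}^4$ the BPST curvature has the ``conformal'' form $[\xi_1\,|\,\xi_2\,|\,\xi_3]=h(|x|)\cdot(\text{orthogonal matrix})$ in an orthonormal frame of $\Lambda^2_-$, so $|F_{A,x}|_{\mathrm{op}}=h(|x|)$; matching the Yang--Mills energy to $c_2=1$ in the conventions of the introduction (with $|i|=|j|=|k|=1$) pins down $h(|x|)=2\sqrt2\,\lambda^2/(|x|^2+\lambda^2)^2$. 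Under the conformal change to the cylinder metric the operator norm of a $2$‑form at $x$ is multiplied by $|x|^2$, so on the cylinder $|F_{A,x}|_{\mathrm{op}}=2\sqrt2\,\lambda^2|x|^2/(|x|^2+\lambda^2)^2$, whose supremum over $x$ equals $1/\sqrt2$ (attained at $|x|=\lambda$, i.e.\ on a single cross‑section, independently of $\lambda$). The only delicate point here is the normalization constant $2\sqrt2$; everything else is bookkeeping.

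\emph{The lower bound.} Let $A$ be non‑flat ASD with $M:=\norm{F_A}_{\mathrm{op}}<\infty$; the aim is to deduce $F_A\equiv0$ when $M$ is small. Since $\mathbb{R}\times S^3$ is conformally flat, $W^\pm=0$; and since $F_A$ is ASD, the Bianchi identity gives $d_A^*F_A=-{*}d_A{*}F_A={*}d_AF_A=0$, so $F_A$ is a $d_A$‑harmonic $2$‑form. The Weitzenböck formula for $2$‑forms then reduces, using the scalar curvature $s=6$, to
\[
\nabla_A^*\nabla_A F_A=-2F_A-2\Psi(F_A),\qquad \Psi(F_A)_{kl}:=\textstyle\sum_j [F_{A,kj},F_{A,jl}].
\]
Pairing with $F_A$, using $\tfrac12\Delta|F_A|^2=\langle\nabla_A^*\nabla_A F_A,F_A\rangle-|\nabla_A F_A|^2$ together with Kato's inequality, one gets for $f:=|F_A|$, wherever $f>0$,
\[
\Delta f\le -2f-\frac{2\langle\Psi(F_A),F_A\rangle}{f}.
\]
Diagonalizing $F_A$ at a point as $\sum_{a=1}^3 \xi_a\otimes\hat\omega_a$ over an orthonormal basis $\hat\omega_1,\hat\omega_2,\hat\omega_3$ of $\Lambda^2_-$ shows $\langle\Psi(F_A),F_A\rangle=6\sqrt2\det[\xi_1\,|\,\xi_2\,|\,\xi_3]$; and with $\mu_1\ge\mu_2\ge\mu_3$ the singular values of $[\xi_1\,|\,\xi_2\,|\,\xi_3]$ one has $|F_{A,p}|_{\mathrm{op}}=\mu_1$, $f^2=\mu_1^2+\mu_2^2+\mu_3^2$, and, since $\mu_1^2\ge\mu_2\mu_3$,
\[
|\det[\xi_1\,|\,\xi_2\,|\,\xi_3]|\le \mu_1\mu_2\mu_3\le\tfrac13\mu_1(\mu_1^2+\mu_2^2+\mu_3^2)=\tfrac13\mu_1 f^2 .
\]
Hence $\Delta f\le(4\sqrt2\,M-2)f$; equivalently, invoking the refined Kato inequality for anti‑self‑dual $2$‑forms, $\Delta h\le(2\sqrt2\,M-1)h$ with $h:=f^{1/2}$. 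When the coefficient is negative, a bounded non‑negative subsolution of $(\Delta+c)h\le0$ with $c>0$ must vanish ($\mathbb{R}\times S^3$ is parabolic, so the $S^3$‑average of $h$ is convex and bounded on $\mathbb{R}$, hence constant, hence zero), and so $F_A\equiv0$.

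\emph{The main obstacle.} This chain, run naively, only yields $M\ge\tfrac1{2\sqrt2}$ — a factor of two short. At the threshold $M=1/\sqrt2$ all the inequalities above are simultaneously saturated by the transplanted BPST instanton ($\mu_1=\mu_2=\mu_3$, and $h=\mathrm{const}\cdot\operatorname{sech}t$ is the ground state of the Pöschl--Teller operator $-\partial_t^2+1-2\operatorname{sech}^2t$), so the sharp step is genuinely delicate. To reach $1/\sqrt2$ one must not replace $\mu_1(x)$ by $M$ prematurely: keeping the sharp pointwise inequality $\Delta h\le(2\sqrt2\,\mu_1(x)-1)h$, I would first show that $M<1/\sqrt2$ forces $F_A$ to decay at the ends $t\to\pm\infty$ fast enough for finite Yang--Mills energy, then remove the two singularities (Uhlenbeck) to conformally compactify $A$ to an ASD connection on $S^4$ of positive instanton number, and finally run the Bochner argument on the closed manifold $S^4$ — where the corresponding identity integrates and the constant comes out sharp — or an equivalent sharp eigenvalue/comparison argument directly on the cylinder, to reach a contradiction. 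I expect this last, sharp step to be the crux; the decay estimate and the Weitzenböck/linear‑algebra bookkeeping should be comparatively routine.
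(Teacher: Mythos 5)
Your first half (realizing $1/\sqrt2$ by the BPST instanton transplanted conformally to the cylinder) is correct and is exactly what the paper does. The lower bound, however, has a genuine gap, and you have in fact located it yourself: the Weitzenb\"ock/Bochner chain yields only $M\ge 1/(2\sqrt2)$, and the proposed repair is a sketch without a working mechanism. Two specific problems. First, your plan begins with ``show that $M<1/\sqrt2$ forces finite Yang--Mills energy'' --- but you give no argument for this step, and it is precisely here that the sharp constant must enter; nothing in your Bochner setup distinguishes $1/\sqrt2$ from $1/(2\sqrt2)$ at this stage. Second, the fallback of removing the singularities and running Bochner on $S^4$ cannot close the argument: $S^4$ carries non-flat instantons, so any vanishing theorem there needs a smallness hypothesis on the curvature \emph{in the round metric}, and the conformal factor relating the cylinder metric to the round metric is unbounded at the two punctures, so the cylinder bound $M<1/\sqrt2$ gives no round-metric bound at all.

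The paper's mechanism for the sharp constant is entirely different and is worth internalizing. The ASD equation forces $F_A=-dt\wedge(*_3F(A|_{\{t\}\times S^3}))+F(A|_{\{t\}\times S^3})$, whence $|F_{A,(t,\theta)}|_{\mathrm{op}}=\sqrt2\,|F(A|_{\{t\}\times S^3})_\theta|_{\mathrm{op}}$; so $M<1/\sqrt2$ is equivalent to every slice connection $B_t:=A|_{\{t\}\times S^3}$ satisfying $\norm{F_{B_t}}_{\mathrm{op}}<1/2$. For such a $B_t$, the holonomy around any geodesic bigon through the poles of $S^3$ lies within distance $2\pi\norm{F_{B_t}}_{\mathrm{op}}<\pi$ of the identity, and $\pi$ is the injectivity radius of $SU(2)$; this lets one interpolate uniquely between the two exponential gauges and produce a \emph{global} trivialization of $E|_{\{t\}\times S^3}$ with connection matrix of size $O(\norm{F_{B_t}}_{\mathrm{op}})$. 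Evaluating the Chern--Simons functional in this gauge bounds $\int_{[-R,R]\times S^3}|F_A|^2$ uniformly in $R$, so the energy is finite; finite energy gives exponential decay of $F_A$ at the ends (Donaldson), the boundary Chern--Simons terms then tend to $0$, and the energy is zero, i.e.\ $A$ is flat. The sharp threshold $1/\sqrt2$ is exactly $\sqrt2\times\frac12$, the product of the ASD splitting factor and the holonomy/injectivity-radius threshold --- not an eigenvalue of a Schr\"odinger operator. Your linear-algebra and Kato-inequality bookkeeping is fine as far as it goes, but it cannot be pushed to the sharp constant along the route you describe.
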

The above minimum value $1/\sqrt{2}$ is attained by the following BPST instanton 
(\cite{BPST}).
\begin{example} \label{example: BPS instanton}
We define a $SU(2)$ instanton $A$ on $\mathbb{R}^4$ by 
\[ A := \mathrm{Im}\left(\frac{\bar{x}dx}{1+|x|^2}\right), \quad (x= x_1+x_2i+x_3j+x_4k).\]
By the conformal map 
\[ \mathbb{R}\times S^3\to \mathbb{R}^4\setminus \{0\},\quad 
   (t, \theta) \mapsto e^t \theta ,\]
the connection $A$ is transformed into an ASD connection $A'$ on $E$ over $\mathbb{R}\times S^3$.
Then 
\[ |F_{A',(t,\theta)}|_{\mathrm{op}} = \frac{2\sqrt{2}}{(e^t+e^{-t})^2}.\]
Hence 
\[ \norm{F_{A'}}_{\mathrm{op}} = \frac{1}{\sqrt{2}}.\]
\end{example}

Theorem \ref{main theorem} is a Yang-Mills analogy of the classical result of 
Lehto \cite[Theorem 1]{Lehto} in complex analysis.
(The formulation below is due to Eremenko \cite[Theorem 3.2]{Eremenko}.
See also Lehto-Virtanen \cite[Theorem 1]{Lehto-Virtanen}.) 

Consider $\affine^* := \affine\setminus \{0\}$ with the length element $|dz|/|z|$.
We give a metric on $\affine P^1 = \affine \cup\{\infty\}$ by 
(naturally) identifying it with the unit $2$-sphere $\{x_1^2+x_2^2+x_3^2=1\}$.
For a map $f:\affine^*\to \affine P^1$ we denote its Lipschitz constant by
$\mathrm{Lip}(f)$.

Then Lehto \cite[Theorem 1]{Lehto} proved that 
the minimum of $\mathrm{Lip}(f)$ over non-constant holomorphic maps 
$f:\affine^*\to \affine P^1$ is equal to $1$. 
The function $f(z) = z$ attains the minimum.

Eremenko \cite[Section 3]{Eremenko} discussed the relation between this Lehto's result
and a quantitative homotopy argument of Gromov \cite[Chapter 2, 2.12. Proposition]{Gromov}.
Our proof of Theorem \ref{main theorem} is inspired by this idea.

\section{Preliminaries: Connections over $S^3$} \label{section: preliminaries}

In this section we study the method of choosing good gauges for 
some connections over $S^3$.
The argument below is a careful study of \cite[pp. 146-148]{Freed-Uhlenbeck}.
Set $N:=(1,0,0,0)\in S^3$ and $S:=(-1,0,0,0)\in S^3$.
Let $P:=S^3\times SU(2)$ be the product $SU(2)$-bundle over $S^3$.
For a connection $B$ on $P$ we define the operator norm $\norm{F_B}_{\mathrm{op}}$ 
in the same way as in Section \ref{section: introduction}.

Let $v_1,v_2\in T_NS^3$ be two unit tangent vectors at $N$. ($|v_1|=|v_2|=1$.)
Let $\exp_N:T_NS^3\to S^3$ be the exponential map at $N$.
Since $|v_1|=|v_2|=1$, we have $\exp_N(\pi v_1) = \exp_N(\pi v_2) =S$.
We define a loop $l:[0,2\pi]\to S^3$ by 
\begin{equation*}
 l(t) := \begin{cases}
           \exp_N(tv_1) \quad &(0\leq t\leq \pi) \\
           \exp_N((2\pi-t)v_2) \quad &(\pi\leq t\leq 2\pi).
         \end{cases}
\end{equation*}      
\begin{lemma} \label{lemma: holonomy and curvature}
Let $B$ be a connection on $P$.
Let $\mathrm{Hol}_l(B)\in SU(2)$ be the holonomy of $B$ along the loop $l$.
Then 
\[ d(\mathrm{Hol}_l(B), 1)\leq 2\pi \norm{F_B}_{\mathrm{op}}.\]
Here $d(\cdot,\cdot)$ is the distance on $SU(2)$ defined by the Riemannian metric.
\end{lemma}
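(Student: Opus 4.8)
The plan is to express the holonomy around the loop $l$ as a product of two path-ordered exponentials and to estimate each one using the curvature. First I would choose a gauge along the two geodesic arcs. Concretely, for the arc $l_1(t) = \exp_N(tv_1)$, $0\le t\le \pi$, I would use parallel transport along the radial geodesics from $N$ to trivialize $B$ in a normal-coordinate neighbourhood (radial gauge / exponential gauge centered at $N$); in such a gauge $B$ vanishes in the radial direction, so parallel transport along $l_1$ is trivial in this gauge. The same construction centered at $S$ gives a gauge in which parallel transport along $l_2(t) = \exp_N((2\pi-t)v_2)$ (which is a radial geodesic emanating from $S$) is trivial. The holonomy $\mathrm{Hol}_l(B)$ is then the transition function between these two gauges, evaluated at the basepoint, i.e. it is controlled by how much the two radial gauges differ on the overlap.

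Alternatively — and this is probably cleaner to write — I would use a single gauge and bound $d(\mathrm{Hol}_l(B),1)$ directly. Write $\mathrm{Hol}_l(B) = g_2 g_1$ where $g_1$ is the parallel transport along $l_1$ from $N$ to $S$ and $g_2$ along the return arc from $S$ to $N$. Consider the geodesic homotopy: let $l_s$, $s\in[0,1]$, be the loop obtained by sweeping the arc $\exp_N(tv_1)$ to $\exp_N(tv_2)$ through the geodesics $t\mapsto \exp_N(t\,v(s))$ where $v(s)$ is a path of unit vectors from $v_1$ to $v_2$ in the unit sphere of $T_NS^3$; all these loops are pinched at $N$ and at $S$. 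Since holonomy along a contractible family changes by the integral of the curvature over the swept surface, the standard estimate (non-abelian Stokes / the differential equation $\frac{d}{ds}\mathrm{Hol}_{l_s} = -\big(\int \text{Ad-transported } F_B\big)\mathrm{Hol}_{l_s}$) gives
\[
 d(\mathrm{Hol}_l(B), 1) \le \int_{\Sigma} |F_B|_{\mathrm{op}}\, d\mathrm{vol},
\]
where $\Sigma$ is the image of the homotopy. The key geometric input is then that $\Sigma$ has area at most $2\pi$: it is a union of two geodesic "bigons" on $S^3$ (one the cone on $v(s)$ at $N$ up to distance $\pi/2$ before the geodesics start to refocus, the other the mirror cone at $S$), and each geodesic ball of radius $\pi/2$ in $S^3$ has the relevant $2$-dimensional slice bounded appropriately; more directly, the whole swept surface is contained in the union of two geodesic disks and a short computation with the round metric shows $\mathrm{vol}(\Sigma)\le 2\pi$. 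Combining the two displays yields $d(\mathrm{Hol}_l(B),1) \le 2\pi \norm{F_B}_{\mathrm{op}}$.

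The main obstacle I anticipate is making the area bound $\mathrm{vol}(\Sigma)\le 2\pi$ clean and honest, since the two geodesic arcs refocus at $S$ and the naive parametrization of the homotopy can be singular or can double-cover part of the region; I would handle this by writing $\Sigma$ explicitly in geodesic polar coordinates around $N$ on $[0,\pi/2]$ and around $S$ on $[0,\pi/2]$, bounding the Jacobian of the map $(t,s)\mapsto \exp_N(t\,v(s))$ on each piece by that of the round metric (which only makes areas smaller past the equator, not larger — here $\sin t \le 1$ does the job), and checking that the contribution of each of the two cone pieces is at most $\pi$. A secondary point to be careful about is the precise constant in the non-abelian Stokes estimate: I would derive it from Gronwall applied to the holonomy ODE in a fixed gauge, noting that $|\mathrm{Ad}_g|=1$ for $g\in SU(2)$ so no extra factors appear, and that the operator norm $|F_B|_{\mathrm{op}}$ is exactly what controls $|F_B(e,e')|$ for orthonormal $e,e'$, which is what enters after we parametrize $\Sigma$ by an (almost) area-preserving chart.
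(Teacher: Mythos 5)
Your second route is essentially the paper's proof: the paper realizes your geodesic homotopy explicitly as $\Phi(\theta_1,\theta_2)=\exp_N\{\theta_2(v_1\cos\theta_1+v_3\sin\theta_1)\}$ on $[0,\alpha]\times[0,\pi]$ with $v_2=v_1\cos\alpha+v_3\sin\alpha$ (a lune of the totally geodesic $S^2$ containing both arcs), derives the holonomy-variation estimate from $F_{\Phi^*B}(\partial_1,\partial_2)g=-(\partial_2+B_2)(\partial_1+B_1)g$ in the gauge radial from $N$, and bounds the integrand by $\norm{F_B}_{\mathrm{op}}\sin\theta_2$ since $|d\Phi(\partial_{\theta_1})|=\sin\theta_2$ and $|d\Phi(\partial_{\theta_2})|=1$. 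One correction to your area argument: the crude bound $\sin t\leq 1$ does \emph{not} do the job (it would give at most $\pi\alpha\leq\pi^2$, and $\pi^2/2>\pi$ per half-cone); the sharp constant comes from the exact computation $\int_0^{\pi}\sin\theta_2\,d\theta_2=2$ times the lune angle $\alpha\leq\pi$, i.e.\ area $=2\alpha\leq 2\pi$, half the area of the unit $2$-sphere. Your worry about refocusing and double-covering is unfounded: taking $v(s)$ to be the minimizing arc from $v_1$ to $v_2$ in the unit sphere of $T_NS^3$, the swept surface is an embedded lune, and the degeneracy of the parametrization at $\theta_2=0,\pi$ is harmless in the integral.
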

\begin{proof}
This follows from the standard fact that curvature is an infinitesimal holonomy 
\cite[p. 36]{Donaldson-Kronheimer}.
($2\pi$ is half the area of the unit $2$-sphere.)
The explicit proof is as follows:
Take a unit tangent vector $v_3\in T_NS^3$ orthogonal to $v_1$ such that there is 
$\alpha\in [0,\pi]$ satisfying 
$v_2= v_1\cos \alpha + v_3\sin \alpha$.
Consider (the spherical polar coordinate of the totally geodesic $S^2\subset S^3$
tangent to $v_1$ and $v_3$):
\[\Phi: [0,\alpha]\times [0,\pi]\to S^3, \quad 
(\theta_1, \theta_2)\mapsto \exp_N\{\theta_2(v_1\cos\theta_1 + v_3\sin\theta_1)\}.\]

Let $Q$ be the pull-back of the bundle $P$ by $\Phi$.
Since $\Phi([0,\alpha]\times \{0\})=\{N\}$ and 
$\Phi([0,\alpha]\times \{\pi\})=\{S\}$, 
$Q$ admits a trivialization under which the pull-back connection 
$\Phi^*B$ is expressed as 
$\Phi^*B=B_1d\theta_1+B_2\theta_2$ with 
$B_1=0$ on $[0,\alpha]\times \{0,\pi\}$.

We take a smooth map $g:[0,\alpha]\times [0,\pi]\to SU(2)$ satisfying 
\[ g(\theta_1,0)= 1 \quad (\forall \theta_1\in [0,\alpha]),\quad 
   (\partial_2+B_2) g=0.\]
We have $\mathrm{Hol}_l(B) = g(\alpha,\pi)^{-1}g(0,\pi)$.
Then $F_{\Phi^*B}(\partial_1, \partial_2)g = 
[\partial_1+B_1,\partial_2+B_2]g= 
-(\partial_2+B_2)(\partial_1+B_1)g$.
Since $B_1=0$ on $[0,\alpha]\times \{0,\pi\}$,
\[ |\partial_1g(\theta_1, \pi)|\leq 
   \int_{\{\theta_1\}\times [0,\pi]}
   |F_{\Phi^*B}(\partial_1,\partial_2)|d\theta_2.\]
Then
\[ d(\mathrm{Hol}_l(B),1)= d(g(0,\pi),g(\alpha,\pi)) \leq 
   \int_{[0,\alpha]\times [0,\pi]}|F_{\Phi^*B}(\partial_1, \partial_2)|
   d\theta_1 d\theta_2.\]
$F_{\Phi^*B}(\partial_1,\partial_2) = 
F_B(d\Phi(\partial/\partial\theta_1),d\Phi(\partial/\partial\theta_2))$.
The vectors $d\Phi(\partial/\partial\theta_1)$ and $d\Phi(\partial/\partial\theta_2)$ are orthogonal 
to each other, and 
$|d\Phi(\partial/\partial\theta_1)| = \sin\theta_2$ and $|d\Phi(\partial/\partial\theta_2)|=1$.
Hence $|F_{\Phi^*B}(\partial_1,\partial_2)|\leq 
\norm{F_B}_{\mathrm{op}} \sin\theta_2$.
Thus, from $0\leq \alpha\leq \pi$,
\[ d(\mathrm{Hol}_l(B),1)\leq 
 \norm{F_B}_{\mathrm{op}}\int_{[0,\alpha]\times [0,\pi]}\sin\theta_2 \, d\theta_1 d\theta_2
 = 2\alpha \norm{F_B}_{\mathrm{op}}\leq 2\pi \norm{F_B}_{\mathrm{op}}.\]
\end{proof}
Let $\tau<1/2$.
Let $B$ be a connection on $P$ satisfying $\norm{F_B}_{\mathrm{op}}\leq \tau$.
We will construct a good connection matrix of $B$.

Fix $v\in T_NS^3$.
By the parallel translation along the geodesic $\exp_N (tv)$ $(0\leq t\leq \pi)$
we identify the fiber $P_S$ with the fiber $P_N$. 
Let $g_N$ and $g_S$ be the exponential gauges
(see \cite[p. 146]{Freed-Uhlenbeck} or \cite[p. 54]{Donaldson-Kronheimer})
centered at $N$ and $S$ respectively:
\[ g_N:P|_{S^3\setminus \{S\}}\to (S^3\setminus \{S\})\times P_N, \quad 
   g_S:P|_{S^3\setminus \{N\}}\to (S^3\setminus \{N\})\times P_N.\]
(In the definition of $g_S$ we identify $P_S$ with $P_N$ as in the above.)
By Lemma \ref{lemma: holonomy and curvature}, for $x\in S^3\setminus \{N,S\}$,
\[ d(g_N(x),g_S(x))\leq 2\pi \norm{F_B}_{\mathrm{op}} \leq 2\pi\tau < \pi.\]
The injectivity radius of $SU(2) = S^3$ is $\pi$ (this is a crucial point of the argument).
Hence there uniquely exists $u(x)\in \mathrm{ad} P_N (\cong su(2))$ satisfying 
\[ |u(x)|\leq 2\pi \norm{F_B}_{\mathrm{op}}, \quad g_S(x) = e^{u(x)}g_N(x).\]
We take and fix a cut-off function $\varphi:S^3\to [0,1]$ such that 
$\varphi(x_1,x_2,x_3,x_4)$ is equal to $0$ over $\{x_1>1/2\}$ and equal to 
$1$ over $\{x_1<-1/2\}$.
We can define a bundle trivialization $g$ of $P$ all over $S^3$ by 
$g := e^{\varphi u}g_N$
Then the connection matrix $g(B)$ satisfies 
\[ |g(B)|\leq C_\tau \norm{F_B}_{\mathrm{op}}.\]   
Here $C_\tau$ is a positive constant depending on $\tau$.

\section{Proof of Theorem \ref{main theorem}} \label{section: proof of main theorem}

In this section we denote by $t$ the standard coordinate of $\mathbb{R}$.
Let $A$ be an ASD connection on $E$ satisfying 
$\norm{F_A}_{\mathrm{op}} < 1/\sqrt{2}$.
We will prove that $A$ must be flat.
Set $\tau:=\norm{F_A}_{\mathrm{op}}/\sqrt{2} <1/2$.

The ASD equation implies that $F_A$ has the following form: 
\[ F_A = -dt\wedge (*_3F(A|_{\{t\}\times S^3})) + F(A|_{\{t\}\times S^3}), \]
where $A|_{\{t\}\times S^3}$ is the restriction of $A$ to $\{t\}\times S^3$ 
and $*_3$ is the Hodge star on $\{t\}\times S^3$.
Hence 
\[ |F_{A,(t,\theta)}|_{\mathrm{op}} = \sqrt{2}|F(A|_{\{t\}\times S^3})_\theta|_{\mathrm{op}}.\]
Therefore 
\[ \norm{F(A|_{\{t\}\times S^3})}_{\mathrm{op}} \leq \tau < \frac{1}{2}\quad (\forall t\in \mathbb{R}).\]
Thus we can apply the construction of Section \ref{section: preliminaries} 
to $A|_{\{t\}\times S^3}$.

Fix a bundle trivialization of $E$ over $\mathbb{R}\times \{N\}$.
(Any choice will do.)
Then the construction in Section \ref{section: preliminaries} gives
a bundle trivialization $g$ of $E$ over $X$ satisfying 
\[ |g(A)|_{\{t\}\times S^3}|\leq C_\tau \norm{F(A|_{\{t\}\times S^3})}_{\mathrm{op}} \quad 
(\forall t\in \mathbb{R}).\]
Set $A' := g(A)$. We consider the Chern-Simons functional 
\[ cs(A') := \mathrm{tr}(A'\wedge F_{A'} - \frac{1}{3}A'^3).\]
For $R>0$
\begin{equation} \label{eq: Stokes theorem}
  \begin{split}
   \int_{[-R,R]\times S^3}|F_A|^2d\vol &= \int_{\{R\}\times S^3}cs(A') 
   -\int_{\{-R\}\times S^3} cs(A') \\
   &\leq \const_\tau \left(\norm{F(A|_{\{R\}\times S^3})}_{\mathrm{op}}
   + \norm{F(A|_{\{-R\}\times S^3})}_{\mathrm{op}}\right).
  \end{split}
\end{equation}
$\norm{F(A|_{\{\pm R\}\times S^3})}_{\mathrm{op}}$ are bounded as $R\to +\infty$.
Thus 
\[ \int_X |F_A|^2 d\vol < +\infty.\]
This implies that the curvature $F_A$ has an exponential decay at the ends
(see \cite[Theorem 4.2]{Donaldson}).
In particular 
\[ \norm{F(A|_{\{\pm R\}\times S^3})}_{\mathrm{op}} \to 0\quad (R\to +\infty).\]
By the above (\ref{eq: Stokes theorem}) 
\[ \int_X |F_A|^2 d\vol = 0.\]
This shows $F_A \equiv 0$. So $A$ is flat.


\vspace{10mm}

\address{ Masaki Tsukamoto \endgraf
Department of Mathematics,
Kyoto University, Kyoto 606-8502, Japan}

\textit{E-mail address}: \texttt{tukamoto@math.kyoto-u.ac.jp}

\end{document}